\documentclass[12pt]{amsart}%
\usepackage{verbatim}
\usepackage{latexsym}
\usepackage{amsmath,enumitem}
\usepackage{amsthm}
\usepackage{amsfonts}
\usepackage{mathtools}
\usepackage{amssymb}
\usepackage[table]{xcolor}
\usepackage{xfrac}
\usepackage{hyperref}
\usepackage{graphicx}%
\usepackage{tikz,tikz-cd}
\pagestyle{myheadings}
\textheight=23cm
\textwidth=17.7cm
\topmargin=-0.7cm
\oddsidemargin=-.64cm
\evensidemargin=-.64cm

\graphicspath{ {./} }

\newcommand{\C}{\mathbb{C}}



\newtheorem{thm}{Theorem}

\newtheorem{prop}[thm]{Proposition}
\newtheorem{coro}[thm]{Corollary}
\newtheorem{lem}[thm]{Lemma}

\theoremstyle{remark}
\newtheorem*{remark}{Remark}

\newcommand{\m}{\mathrm{m}}
\newcommand{\ol}[1]{\overline{#1}}

\newcommand{\logplus}{\log^+}

\renewcommand{\pmod}[1]{{\ifmmode\text{\rm\ (mod~$#1$)}\else\discretionary{}{}{\hbox{ }}\rm(mod~$#1$)\fi}}

\author{Matilde Lal\'in}
\author{Siva Sankar Nair}

\address{Matilde Lal\'in:  D\'epartement de math\'ematiques et de statistique,
                                    Universit\'e de Montr\'eal,
                                    CP 6128, succ. Centre-ville,
                                     Montreal, QC H3C 3J7, Canada}\email{matilde.lalin@umontreal.ca}

\address{Siva Sankar Nair:  D\'epartement de math\'ematiques et de statistique,
                                    Universit\'e de Montr\'eal,
                                    CP 6128, succ. Centre-ville,
                                     Montreal, QC H3C 3J7, Canada}\email{siva.sankar.nair@umontreal.ca}
\begin{document}

\title{An invariant property of Mahler measure}

\begin{abstract}
 We exhibit a change of variables that maintains the Mahler measure of a given polynomial. This method leads to the construction of 
highly non-trivial polynomials with given Mahler measure and settles some conjectural numerical  formulas due to Boyd and Brunault.
\end{abstract}

\subjclass[2010]{Primary 11R06; Secondary 11M06, 11R42}
\keywords{Mahler measure; zeta values}

\maketitle

\section{Introduction}
Given a non-zero rational function $P \in \C(x_1,\dots,x_n)$, its (logarithmic) Mahler measure is given by 
\begin{equation*}
 \m(P):=\frac{1}{(2\pi i)^n}\int_{\mathbb{T}^n}\log|P(x_1,\dots, x_n)|\frac{\mathrm{d}  x_1}{x_1}\cdots \frac{\mathrm{d}  x_n}{x_n},
\end{equation*}
where the integration is taken over the unit torus $\mathbb{T}^n=\{(x_1,\dots,x_n)\in \mathbb{C}^n : |x_1|=\cdots=|x_n|=1\}$ with respect to the Haar measure. 

This quantity first appeared (for one variable-polynomials) in considerations by Lehmer \cite{Le} in his work related to the generalization of Mersenne numbers, and was later extended to polynomials with several variables by Mahler  \cite{Mah} in applications to polynomial heights. About 20 years later Smyth \cite{S1,B1} proved the  formulas
\[\m(x+y+1) = \frac{3 \sqrt{3}}{4 \pi} L(\chi_{-3},2),\]
\begin{equation}\label{eq:Smyth3}
\m(x+y+z+1)=\frac{7}{2\pi^2} \zeta(3),
\end{equation}
where  $L(\chi_{-3},s)$ is the Dirichlet $L$-function in the character of conductor 3 and $\zeta(s)$ is the Riemann zeta function.

These formulas opened the door to a wave of research in Mahler measure and special values of functions with arithmetical significance. Consider, for example, the following formula conjectured by Deninger  \cite{Deninger} and Boyd \cite{Bo98}  and proven by Rogers and Zudilin \cite{RZ14}:
\[\m\left(x+\frac{1}{x}+y+\frac{1}{y}+1\right)=L'(E_{15a8},0),\]
where $L(E_{15a8},s)$ denotes the $L$-function associated to the elliptic curve $15a8$. 

The appearance of values of $L$-functions in certain Mahler measure formulas was explained by Deninger in terms of Beilinson's conjectures via relationships with regulators. (See also the works of Boyd \cite{Bo98} and Rodriguez-Villegas \cite{RV} for additional insights, and the book of Brunault and Zudilin \cite{BrunaultZudilin} for more details.) Likewise, the cases involving the Riemann zeta function and Dirichlet $L$-functions have been linked to particular applications of the Borel regulator and evaluations of polylogarithms in algebraic numbers \cite{Boyd-RV,Boyd-RV2,L-Duke,L08}. 

A particularly interesting formula was proven by Condon \cite{Condon}:
\begin{equation}\label{eq:Condon}
\m(x+1+(x-1)(y+z))=\frac{28}{5\pi^2}\zeta(3).
\end{equation}
Although this identity seems similar to \eqref{eq:Smyth3}, it is indeed much harder to prove. While the right-hand side of \eqref{eq:Smyth3} is the result of evaluating the trilogarithm in $1$ and $-1$, the right-hand side of \eqref{eq:Condon} is the result of evaluating the trilogarithm in combinations of $\varphi=\frac{1+\sqrt{5}}{2}$. This complicates the computation considerably, independently of whether one uses elementary methods as in \cite{Condon} or the regulator as in \cite{L-Duke}.

In \cite{Boyd06}, Boyd proposed the numerical study of polynomials of the form $a(x)+b(x)y+c(x)z$, where $a(x), b(x), c(x)$ are products of cyclotomic polynomials. The focus on this particular class of polynomials was motivated by the Cassaigne--Maillot formula for the Mahler measure of $a+by+cz$ \cite{CM}, which has an expression that is specially well-suited for numerical integration. The exploration of such polynomials led to the discovery of several interesting numerical formulas involving $L'(E,-1)$ for various elliptic curves \cite{Lalin-Crelle}. For example, Boyd discovered
\begin{equation}\label{eq:L21}
\m(1+(x-1)y+(x+1)z)\stackrel{?}{=}\frac{5}{4}L'(E_{21a1},-1),
\end{equation}
where the question mark denotes a numerical identity that has been verified to at least 20 decimal places, and also  
\begin{equation}
\m(x^2+x+1+(x^2-1)(y+z))\stackrel{?}{=}\frac{28}{5\pi^2}\zeta(3),\label{eq:2}
\end{equation}
which involves, again, the term $\frac{28}{5\pi^2}\zeta(3)$.

More recently Brunault further pursued these computations (with higher degree cyclotomic polynomials) and discovered various numerical formulas yielding $\frac{28}{5\pi^2}\zeta(3)$ such as
\begin{align}
\m(x^4-x^3+x^2-x+1+ (x^4-x^3+x-1)(y+z))&\stackrel{?}{=}\frac{28}{5\pi^2}\zeta(3),\label{eq:4}\\
\m(x^5+x^4+x+1+ (x^5-1)(y+z))&\stackrel{?}{=}\frac{28}{5\pi^2}\zeta(3),\label{eq:5}
\end{align}
and several others. 

We started this project by investigating whether the above polynomials having Mahler measure $\frac{28}{5\pi^2}\zeta(3)$ had anything in common with each other. We discovered that they all could be obtained from Condon's polynomial in \eqref{eq:Condon} by certain changes of variables. In fact, such changes of variables provide a method for generating arbitrarily many rational functions that have the same Mahler measure, giving rise to highly non-trivial identities. Moreover, we can also obtain families of conjectures by applying the change of variables on polynomials such as the one in \eqref{eq:L21}.

Before stating our main result, we establish some notation. For a polynomial $g(x) \in \C[x]$, such that $g(x)=\sum_{j=0}^d g_j x^j$ with $g_d\not = 0$, denote by \[\ol g(x)=\sum_{j=0}^d \ol g_j x^j,\] the polynomial resulting from conjugating the coefficients of $g(x)$. 

We prove the following result. 
\begin{thm}\label{thm:main}
Let $P(x,y_1,\dots,y_n)$ be a polynomial over $\C$ in the variables $x,y_1,\dots,y_n$.  Let $g(x)\in\C[x]$ be a polynomial in the variable $x$ with all roots outside the unit disc, and for an integer $k$ greater than the degree of $g(x)$ and a complex number $\lambda$ of absolute value one, let $f(x)=\lambda x^k\ol g(x^{-1})$.  We denote by $\widetilde{P}$ the rational function obtained by replacing $x$ by $f(x)/g(x)$ in $P$. Then
\[
\m(P)=\m(\widetilde{P}).
\]
\end{thm}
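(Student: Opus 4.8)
The plan is to reduce the identity to a one-variable change-of-variables statement and then exploit that $x\mapsto f(x)/g(x)$ is a finite Blaschke product. Writing $\phi(x)=f(x)/g(x)$, so that $\log|\widetilde P(x,y_1,\dots,y_n)|=\log|P(\phi(x),y_1,\dots,y_n)|$, Fubini's theorem applied to the definition of $\m(\widetilde P)$ gives
\[
\m(\widetilde P)=\frac{1}{(2\pi i)^n}\int_{\mathbb{T}^n}\left(\frac{1}{2\pi i}\int_{\mathbb{T}}\log|P(\phi(x),y_1,\dots,y_n)|\,\frac{\mathrm{d}x}{x}\right)\frac{\mathrm{d}y_1}{y_1}\cdots\frac{\mathrm{d}y_n}{y_n}.
\]
Thus it suffices to prove that, for almost every fixed $(y_1,\dots,y_n)\in\mathbb{T}^n$, writing $h(w)=\log|P(w,y_1,\dots,y_n)|$, one has $\int_{\mathbb{T}}h(\phi(x))\,\mathrm{d}m(x)=\int_{\mathbb{T}}h(w)\,\mathrm{d}m(w)$, where $\mathrm{d}m=\frac{1}{2\pi i}\frac{\mathrm{d}x}{x}=\frac{\mathrm{d}\theta}{2\pi}$ is normalized Haar measure. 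In other words, the whole theorem follows once I show that $\phi$ preserves Haar measure on the unit circle.

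First I would record two structural facts about $\phi$. For $|x|=1$ we have $x^{-1}=\ol x$, hence $\ol g(x^{-1})=\overline{g(x)}$, and therefore
\[
|f(x)|=|\lambda|\,|x|^k\,|\overline{g(x)}|=|g(x)|\qquad(|x|=1),
\]
using $|\lambda|=1$; thus $|\phi(x)|=1$ on $\mathbb{T}$. Next, since every root of $g$ lies outside the unit disc, $g$ does not vanish on the closed disc and $\phi$ is holomorphic on a neighbourhood of it; by the maximum modulus principle $|\phi|\le 1$ there, so $\phi$ is a finite Blaschke product (a rational inner function). Finally, because $k>\deg g$, the numerator $f(x)=\lambda x^{k}\ol g(x^{-1})$ is divisible by $x^{\,k-\deg g}$ with positive exponent, so $f(0)=0$ while $g(0)\ne 0$; hence $\phi(0)=0$. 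These are exactly the hypotheses under which an inner function is measure-preserving.

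To prove the measure-preserving property I would test against the monomials $w^N$, $N\in\Z$, which is enough since the moments $\int_{\mathbb{T}}w^N\,\mathrm{d}m$ determine a finite Borel measure on $\mathbb{T}$. For $N=0$ both sides equal $1$. For $N\ge 1$, $\phi^N$ is holomorphic on the disc and continuous up to the boundary, so by the mean value property $\int_{\mathbb{T}}\phi(x)^N\,\mathrm{d}m(x)=\phi(0)^N=0$, matching $\int_{\mathbb{T}}w^N\,\mathrm{d}m=0$. For $N\le -1$ I would use that $|\phi|=1$ on $\mathbb{T}$ gives $\overline{\phi}=1/\phi$ there, so $\int_{\mathbb{T}}\phi^{N}\,\mathrm{d}m=\overline{\int_{\mathbb{T}}\phi^{-N}\,\mathrm{d}m}=0$ by the previous case. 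Hence $\phi_*m=m$, and the change-of-variables formula $\int h\circ\phi\,\mathrm{d}m=\int h\,\mathrm{d}(\phi_*m)=\int h\,\mathrm{d}m$ holds for every $m$-integrable $h$. Applying this to $h=\log|P(\cdot,y_1,\dots,y_n)|$ and integrating over $\mathbb{T}^n$ closes the argument.

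The part needing the most care is the passage from monomials to the singular integrand $h=\log|P(\cdot,y_1,\dots,y_n)|$: I must check that $h\in L^1(\mathbb{T})$ for almost every $(y_1,\dots,y_n)$ — which follows from $\m(P)<\infty$ together with Fubini — and that the measure identity $\phi_*m=m$ transfers to such $h$. For the latter I would apply the change-of-variables formula separately to the non-negative functions $\log^+|P|$ and $\log^-|P|$, each handled by monotone approximation from below by continuous functions, so that the finiteness of $\int_{\mathbb{T}}\log^+|P(\cdot,y)|\,\mathrm{d}m$ automatically yields the finiteness of its pushforward counterpart. The only genuine inputs are then the inner-function structure of $\phi$ (which relies on $k>\deg g$ and on $g$ having no zeros in the closed disc) and the elementary moment computation above.
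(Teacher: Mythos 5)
Your argument is correct, but it takes a genuinely different route from the paper. The paper works with the factorization $P=P_\ell\prod_j(x-\Delta_j(y_1,\dots,y_n))$, applies the Lax--Chen root-location lemma to $\Gamma_\beta=f+\beta g$ together with Jensen's formula to get $\m(f-g\Delta_j)=\m(g)+\logplus|\Delta_j|$ for each fixed $(y_1,\dots,y_n)$, and then watches the $\ell\cdot\m(g)$ terms cancel against the contribution of the denominator $g^{-\ell}$. You instead observe that $\phi=f/g$ is a finite Blaschke product with $\phi(0)=0$ (here $k>\deg g$ is exactly what forces $f(0)=0$, and the root condition on $g$ is what makes $\phi$ holomorphic on the closed disc), prove $\phi_*m=m$ by the moment computation, and conclude by change of variables and Fubini. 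The two are morally equivalent --- your identity $\int_{\mathbb{T}}\log|\phi(x)-\beta|\,\mathrm{d}m(x)=\logplus|\beta|$ for all $\beta$ is precisely the paper's Proposition on $\m(\alpha f+\beta g)$, and determines $\phi_*m$ via logarithmic potentials --- but your route avoids the Lax--Chen lemma and the explicit branch functions $\Delta_j$ entirely, and it makes transparent that the invariance holds for composition with \emph{any} inner function fixing the origin and for any integrable integrand, not just $\log|P|$. What the paper's approach buys in exchange is that the root-location lemma and the $\Delta_j$ decomposition are developed anyway (for the existence Lemma and for the regulator discussion in the last section), so the proof is self-contained at that level of machinery. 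The only points in your write-up requiring the care you already flag are the integrability of $\log|P|$ on $\mathbb{T}^{n+1}$ (which is the paper's Lemma on existence of $\m(P)$, needed to split into $\logplus$ and $\log^-$ and apply Tonelli to each) and the continuity of $\phi$ on $\mathbb{T}$, which holds because $g$ has no zeros on the closed unit disc.
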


Using this theorem, equations \eqref{eq:2}, \eqref{eq:4},  and \eqref{eq:5} follow by taking $P(x,y,z)=x+1+(x-1)(y+z)$, which is the polynomial from Condon's result in \eqref{eq:Condon},  with $g(x)=x+2, x^2-2x+2,$ and $x^4+x+2$ along with $k=2, 4,$ and $5$ respectively, and $\lambda =1$. We have a term of $\m(g)=\log2$ in all these cases, which cancels with the contribution from a factor of $2$ occurring in $\widetilde P$, leading  us to the desired identities. 

As another example we can prove the following formulas 
\begin{align}
\m((x^2+x+1)(1+w)(1+u)z+(x^2-1)(1-w)(1+y)) =&  \frac{93}{\pi^4} \zeta(5), \label{eq:93zeta5}\\
\m((x^4-x^3+x^2-x+1)(1+w)(1+u)z+ (x^4-x^3+x-1)(1-w)(1+y))=& \frac{93}{\pi^4} \zeta(5), \label{eq:93zeta5a}\\
\m((x^5+x^4+x+1)(1+w)(1+u)z+ (x^5-1)(1-w)(1+y))=& \frac{93}{\pi^4} \zeta(5),\label{eq:93zeta5b}
\end{align}
by applying Theorem \ref{thm:main} to a result proven in \cite{nvar}.

In addition, by applying this method to \eqref{eq:L21}, we obtain a family of conjectures involving $L'(E_{21a1},-1)$, including
\begin{align}
\m\left(\frac{x+2}{2}+ (x^2+x+1)y+(x^2-1)z\right)&\stackrel{?}{=}\frac{5}{4}L'(E_{21a1},-1),\label{eq:L21-2}\\
\m\left(\frac{x^2-2x+2}{2}+(x^4-x^3+x^2-x+1)y+ (x^4-x^3+x-1)z\right)&\stackrel{?}{=}\frac{5}{4}L'(E_{21a1},-1), \label{eq:L21-3}\\
\m\left(\frac{x^4+x+2}{2}+(x^5+x^4+x+1)y+ (x^5-1)z\right)&\stackrel{?}{=}\frac{5}{4}L'(E_{21a1},-1). \label{eq:L21-4}
\end{align}

This article is organized as follows. In Section \ref{sec:intermediate} we consider some results needed in preparation for the proof of Theorem \ref{thm:main}, which is included in Section \ref{sec:proof}. In Section \ref{sec:applications} we discuss some applications of Theorem \ref{thm:main}, mainly to families of rational functions with an arbitrary number of variables. Finally, Section \ref{sec:genus} sketches the relation between Mahler measures and regulators, with the goal of giving more context to some of the new formulas proven in this article that involve polynomials defining curves of high genus. 

\section*{Acknowledgements} The authors would like to thank David Boyd, Fran\c cois Brunault, Andrew Granville, and Wadim Zudilin for helpful discussions, and also gratefully acknowledge the support of Sujatha Ramdorai. The authors are thankful to the referee for their helpful remarks. This work is supported by  the Natural Sciences and Engineering Research Council of Canada, Discovery Grant 355412-2022, the Fonds de recherche du Qu\'ebec - Nature et technologies, Projet de recherche en \'equipe 300951,  the Institut des sciences math\'ematiques and the Centre de recherches math\'ematiques.

\section{Some intermediate results} \label{sec:intermediate}

Theorem \ref{thm:main} depends on the nature of the change of variables $x \rightarrow f(x)/g(x)$ and how this affects the Mahler measure. In this section, we address this question and prove some intermediate results regarding the Mahler measure of certain univariate polynomials. These results will play an important role in the proof of the main result later on.

We first present the following lemma concerning the roots of a polynomial obtained from the polynomials $f(x)$ and $g(x)$ that appear in the change of variables.

\begin{lem}{\cite[Lemma 1]{Lax}, \cite[Theorem 1]{Chen}}\label{lem:roots}
Let $g(x)\in\C[x]$ be such that all its roots have absolute value greater than or equal to one,  let $k$ be an integer such that $k\ge\mathrm{deg }\;(g)$ and let $f(x)=\lambda x^k \ol g(x^{-1})$, where $\lambda$ is a complex number with absolute value one.  For any complex number $\beta$, consider 
\[
{\Gamma_\beta}(x)=f(x)+\beta g(x) \in \C[x]. 
\]
Then we have the following:
\begin{enumerate}[label=(\alph*)]
\item If $\vert\beta\vert< 1$, then all roots of ${\Gamma_\beta}(x)$ have absolute value less than  one. 
\item If $\vert\beta\vert> 1$, then all roots of ${\Gamma_\beta}(x)$ have absolute value greater than  one. 
\item If $\vert\beta\vert=1$, then all roots of ${\Gamma_\beta}(x)$ have absolute value one.
\end{enumerate}
\end{lem}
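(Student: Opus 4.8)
The plan rests on two elementary observations about $f$ and $g$, followed by an application of Rouch\'e's theorem, with the boundary case handled separately by a continuity argument. First I would record the \emph{boundary identity}: for $|x|=1$ we have $x^{-1}=\overline x$, so $\overline g(x^{-1})=\overline g(\overline x)=\overline{g(x)}$, whence $f(x)=\lambda x^{k}\overline{g(x)}$ and therefore
\[
|f(x)|=|g(x)| \qquad (|x|=1),
\]
since $|\lambda|=|x^{k}|=1$. Next I would locate the zeros of $f$. Writing $g(x)=g_d\prod_i(x-\alpha_i)$ with each $|\alpha_i|\ge 1$ (and $|\alpha_i|>1$ in the situation of Theorem~\ref{thm:main}), the nonzero zeros of $f(x)=\lambda x^{k}\overline g(x^{-1})$ are exactly the points $1/\overline{\alpha_i}$, which satisfy $|1/\overline{\alpha_i}|\le 1$, together with a zero of multiplicity $k-\deg(g)$ at the origin; in particular, when the $\alpha_i$ lie strictly outside the unit disc, all $k$ zeros of $f$ lie strictly inside it. I would also record the companion identity $g(x)=\lambda x^{k}\overline f(x^{-1})$, so that the conjugate-reciprocal transform $h\mapsto\lambda x^{k}\overline h(x^{-1})$ interchanges $f$ and $g$; a short computation then gives $\lambda x^{k}\overline{\Gamma_\beta}(x^{-1})=\overline\beta\,\Gamma_{1/\overline\beta}(x)$, which shows that the zeros of $\Gamma_\beta$ and of $\Gamma_{1/\overline\beta}$ are reflections of one another across the unit circle.

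With these in hand, part (a) is immediate from Rouch\'e's theorem: for $|\beta|<1$ and $|x|=1$ we have $|\beta g(x)|=|\beta|\,|g(x)|<|g(x)|=|f(x)|$ (note that $g$ does not vanish on the circle in this regime), so $\Gamma_\beta=f+\beta g$ has the same number of zeros in the open unit disc as $f$, namely $k$. Since the coefficient of $x^{k}$ in $\Gamma_\beta$ equals $\lambda\overline{g_0}\ne 0$ and is independent of $\beta$, we have $\deg\Gamma_\beta=k$, so \emph{every} zero of $\Gamma_\beta$ lies inside. Part (b) then follows formally from the reflection identity of the previous paragraph: as $\beta$ ranges over $|\beta|<1$, $1/\overline\beta$ ranges over $|1/\overline\beta|>1$, and since the zeros of $\Gamma_{1/\overline\beta}$ are the reflections across $|x|=1$ of the (interior) zeros of $\Gamma_\beta$, they all lie outside the unit disc. (Alternatively one runs Rouch\'e the other way, comparing $\Gamma_\beta$ with $\beta g$.)

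The genuinely delicate case, and the main obstacle, is (c): when $|\beta|=1$ the strict inequality underlying Rouch\'e degenerates into the equality $|f|=|g|$ on the circle. Here I would argue by continuity of roots. Because $\deg\Gamma_\beta=k$ for every $\beta$ with leading coefficient independent of $\beta$, the multiset of zeros of $\Gamma_\beta$ varies continuously with $\beta$. Fixing $\beta$ with $|\beta|=1$ and letting $s\to 1$ along the family $\Gamma_{s\beta}$, part (a) (with $s<1$) shows that each limiting zero has modulus $\le 1$, while part (b) (with $s>1$) shows that each has modulus $\ge 1$; hence every zero of $\Gamma_\beta$ has modulus exactly one. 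Note that the self-inversive symmetry $\lambda x^{k}\overline{\Gamma_\beta}(x^{-1})=\overline\beta\,\Gamma_\beta$, valid when $|\beta|=1$, is consistent with this conclusion but does not by itself force the zeros onto the circle (it only pairs each zero $z$ with $1/\overline z$), which is precisely why the limiting argument is required. The only additional subtlety is the degenerate configuration permitted by the weaker hypothesis $|\alpha_i|\ge 1$, in which $g$ and hence $f$ may vanish on the unit circle; this does not occur in the application to Theorem~\ref{thm:main}, where all roots of $g$ satisfy $|\alpha_i|>1$.
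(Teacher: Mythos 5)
The paper offers no proof of this lemma: it is imported verbatim from Lax and Chen (and the remark notes further proofs by Lal\'in--Smyth and Cohn), so there is no internal argument to compare yours against. Judged on its own, your proof is correct and complete under the hypothesis that actually gets used in Theorem \ref{thm:main}, namely that all roots of $g$ lie \emph{strictly} outside the closed unit disc. The three ingredients --- the identity $|f(x)|=|g(x)|$ on $|x|=1$, the location of the zeros of $f$ at the reflected points $1/\ol{\alpha_i}$ together with a zero of order $k-\deg g$ at the origin, and the involution $\lambda x^{k}\ol{\Gamma_\beta}(x^{-1})=\ol{\beta}\,\Gamma_{1/\ol{\beta}}$ --- are all verified correctly, Rouch\'e then gives (a), reflection gives (b), and your limiting argument for (c) is the right way to handle the degeneration of the strict inequality at $|\beta|=1$ (you are also right that the self-inversive symmetry alone does not suffice there). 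For comparison, the classical route of Lax and Chen observes instead that $f/g$ is a unimodular constant times a finite Blaschke product $\prod_i \frac{x-1/\ol{\alpha_i}}{1-(\alpha_i)^{-1}x}$ times $x^{k-\deg g}$, so $|f/g|\le 1$ on the closed disc and $\ge 1$ outside it, and $\Gamma_\beta=0$ forces $f/g=-\beta$; the two mechanisms are equally short, and yours is a legitimate alternative.

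One point deserves emphasis, and you half-identify it in your closing sentence: when $g$ is allowed to vanish \emph{on} the unit circle (the lemma only assumes $|\alpha_i|\ge 1$), the issue is not merely that your Rouch\'e inequality fails to be strict --- parts (a) and (b) as stated in the lemma are then actually false. If $g(\alpha)=0$ with $|\alpha|=1$, then $f(\alpha)=\lambda\alpha^{k}\ol{g(\alpha)}=0$ as well, so $\alpha$ is a root of $\Gamma_\beta$ of modulus exactly one for \emph{every} $\beta$; concretely, $g(x)=x-1$, $k=1$, $\lambda=1$ gives $\Gamma_\beta=(\beta-1)(x-1)$. The correct general conclusions are ``at most one'' in (a) and ``at least one'' in (b), obtained by first factoring the common unimodular roots out of $f$ and $g$ and applying your argument to the quotient. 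None of this affects the paper: Corollary \ref{coro:base} only needs the non-strict dichotomy, and Theorem \ref{thm:main} assumes all roots of $g$ lie outside the unit disc, which is exactly the regime in which your proof is airtight.
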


\begin{remark}
 This result has a rich history, as it was reproven several times.
 As stated, the original result is due to Lax \cite{Lax} (for the case $k=\deg(g)$) and  Chen \cite{Chen} (for the general case). Both consider the case when $|\beta|=1$ in their statements, but their proofs include the three cases stated in Lemma \ref{lem:roots}. This result was later rediscovered by Lal\'in and Smyth \cite[Theorem 1]{LalinSmyth} (stated for the case $k>\deg(g)$). 
A converse was stated and proven by Cohn \cite{Cohn}. Further results investigate the interlacing of the roots of $\Gamma_\beta$ as $\beta$ varies in the unit circle, see \cite{Lalin-Smyth-addendum} and the references therein for more information. 
\end{remark}

The above result is true for any $k$ greater than or equal to $d$, the degree of $g(x)$. However, for the rest of our results to hold true, we would require $k$ to be strictly greater than $d$.

The above lemma gives us the following corollary using Jensen's formula.

\begin{coro}\label{coro:base}
Let $f$ and $g$ be as before with $k$ strictly greater than the degree of $g(x)$. Then for any $\beta\in\C$, we have
\[
\m(\Gamma_\beta(x))=\log|g_0|+\logplus|\beta|,
\]
where $\logplus|x|:=\log\max\{1,|x|\}$.
\end{coro}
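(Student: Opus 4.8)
The plan is to read $\m(\Gamma_\beta)$ straight off Jensen's formula, which expresses the Mahler measure of a one-variable polynomial $p(x)=a\prod_i(x-\alpha_i)$ as $\m(p)=\log|a|+\sum_i\logplus|\alpha_i|$, and to supply the two required ingredients — the leading coefficient and the locations of the roots — using Lemma \ref{lem:roots}. First I would record the shape of $\Gamma_\beta$. Writing $g(x)=\sum_{j=0}^d g_j x^j$ with $g_d\neq 0$, we have $f(x)=\lambda\sum_{j=0}^d \ol{g_j}\,x^{k-j}$, so the coefficient of $x^k$ in $\Gamma_\beta$ is $\lambda\ol{g_0}$. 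Since every root of $g$ has absolute value at least one, none of them is zero, hence $g_0\neq 0$; together with $|\lambda|=1$ this shows that $\Gamma_\beta$ has degree exactly $k$ for every $\beta$, with leading coefficient of absolute value $|g_0|$. Thus $\log|g_0|$ is always the leading-coefficient contribution in Jensen's formula.

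Next I would split into the three regimes of Lemma \ref{lem:roots}. If $|\beta|\le 1$ (cases (a) and (c)), every root of $\Gamma_\beta$ has absolute value $\le 1$, so each term $\logplus|\alpha_i|$ vanishes and $\m(\Gamma_\beta)=\log|g_0|$; since $\logplus|\beta|=0$ in this range, this is exactly the claimed value. The substantive case is $|\beta|>1$ (case (b)), where every root lies strictly outside the unit circle, so $\sum_i\logplus|\alpha_i|=\sum_i\log|\alpha_i|=\log\left|\prod_i\alpha_i\right|$.

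To evaluate this product I would invoke Vieta's relation $\prod_i\alpha_i=(-1)^k\,\Gamma_\beta(0)/(\lambda\ol{g_0})$. This is the one place where the strict inequality $k>d$ is essential: every exponent $k-j$ occurring in $f$ satisfies $k-j\ge k-d\ge 1$, so $f(0)=0$ and the constant term of $\Gamma_\beta$ is simply $\beta g(0)=\beta g_0$. Hence $\left|\prod_i\alpha_i\right|=|\beta g_0|/|g_0|=|\beta|$, which gives $\m(\Gamma_\beta)=\log|g_0|+\log|\beta|=\log|g_0|+\logplus|\beta|$, as desired.

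I do not anticipate a genuine obstacle, as each step is forced once Lemma \ref{lem:roots} is available; the only point demanding care is the bookkeeping in case (b), and specifically the observation that $f(0)=0$ precisely because $k>d$. Were $k=d$ permitted, the constant term would acquire the extra summand $\lambda\ol{g_d}$ and the clean formula would break down — this is exactly the reason for the strengthening of the hypothesis noted just before the corollary.
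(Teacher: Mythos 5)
Your proof is correct and follows essentially the same route as the paper: Jensen's formula combined with Lemma \ref{lem:roots} and the observation that, since $k>\deg g$, the leading coefficient of $\Gamma_\beta$ is $\lambda\ol{g_0}$ and the constant term is $\beta g_0$. The only cosmetic difference is that you split into the cases $|\beta|\le 1$ and $|\beta|>1$, whereas the paper treats both at once via the identity $\sum_j\logplus|\alpha_j|=\logplus\bigl|\prod_j\alpha_j\bigr|$, valid because all roots lie on the same side of the unit circle.
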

\begin{proof}
Note that $\Gamma_\beta(x)=\lambda x^k \ol g(x^{-1})+\beta g(x)$, and since $k>\mathrm{deg}\;g$, the leading coefficient here is $\lambda \ol{g_0}$ and the constant term is $\beta g_0$.  By Lemma \ref{lem:roots}, we know that if $\alpha_j$ are the roots of $\Gamma_\beta$, then either $|\alpha_j|\ge1$ for all $j$, or $|\alpha_j|\le 1$ for all $j$. In both cases, this means that by Jensen's formula, 
\begin{align*}
\m(\Gamma_\beta)-\log|\lambda \ol{g_0}|=\sum_j \logplus|\alpha_j|&=\logplus\Big|\prod_j \alpha_j \Big|\\
&=\logplus\left|\frac{\beta g_0}{\lambda \ol{g_0}}\right|=\logplus|\beta|,
\end{align*}
and since $\log|\lambda \ol{g_0}|=\log|g_0|$, we get the desired result.
\end{proof}
The next statement is essentially a restatement of Corollary \ref{coro:base} in slightly more general terms. It will be used to prove Theorem \ref{thm:main}  in the next section.
\begin{prop}\label{prop:mainprop}
Let $f$ and $g$ be as before with $k$ strictly greater than the degree of $g(x)$. Then for any $\alpha,\beta\in\C$,  not both zero, we have
\[
\m(\alpha f +\beta g) =\m(g)+ \log\max \{ |\alpha|, |\beta|\}.
\]
\end{prop}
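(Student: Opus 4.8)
The plan is to reduce the general statement to the special case already in hand, namely Corollary \ref{coro:base}, where one of the two coefficients is normalized to $1$. The mechanism is the additivity of the Mahler measure over products together with the evaluation $\m(c)=\log|c|$ for a nonzero constant $c$; this gives $\m(c\,h)=\log|c|+\m(h)$ and lets me pull scalar factors out of the argument. I will also use the fact, implicit in Corollary \ref{coro:base}, that $\m(g)=\log|g_0|$: since $g$ has all roots on or outside the unit circle, Jensen's formula gives $\m(g)=\log|g_d|+\sum_j\log|\alpha_j|=\log|g_0|$, so the normalization in the corollary matches the $\m(g)$ appearing in the proposition.

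First I would dispose of the degenerate case $\alpha=0$. Since $\alpha$ and $\beta$ are not both zero, this forces $\beta\neq 0$, whence $\alpha f+\beta g=\beta g$ and $\m(\alpha f+\beta g)=\log|\beta|+\m(g)$. As $\max\{|\alpha|,|\beta|\}=|\beta|$ here, the asserted identity holds in this case.

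Assuming now $\alpha\neq 0$, I would factor $\alpha f+\beta g=\alpha\bigl(f+(\beta/\alpha)g\bigr)$ and apply additivity together with Corollary \ref{coro:base}, with $\beta$ there replaced by $\beta/\alpha$, to obtain
\[
\m(\alpha f+\beta g)=\log|\alpha|+\m(g)+\logplus\left|\frac{\beta}{\alpha}\right|.
\]
It then remains to recognize that $\log|\alpha|+\logplus|\beta/\alpha|=\log\max\{|\alpha|,|\beta|\}$, which follows from $\logplus|t|=\max\{0,\log|t|\}$ via the one-line computation $\log|\alpha|+\max\{0,\log|\beta|-\log|\alpha|\}=\max\{\log|\alpha|,\log|\beta|\}$.

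The argument is essentially bookkeeping, and I expect no genuine obstacle once Corollary \ref{coro:base} is available. The only points demanding care are the edge case $\alpha=0$, where the factoring step is unavailable and must be handled directly, and the bookkeeping identity for $\logplus$, which merely repackages the answer of the corollary into the symmetric form $\log\max\{|\alpha|,|\beta|\}$.
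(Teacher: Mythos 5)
Your proof is correct and follows essentially the same route as the paper: the paper also splits into the cases $\alpha=0$ and $\alpha\neq 0$, reduces the latter to Corollary \ref{coro:base} by normalizing $\alpha$ to $1$ (which is exactly your factoring-out-$\alpha$ step made explicit), and finishes with the same $\logplus$ bookkeeping. No substantive difference.
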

\begin{remark} Note that since $g$ has all its roots outside the unit circle, we have $\m(g)=\log|g_0|$.  Similarly, since $f$ has all its roots inside the unit circle, its Mahler measure is given by its leading coefficient, which is $\lambda \ol{g_0}$. Thus, $\m(f)=\m(g)=\log|g_0|.$ 
\end{remark}

\begin{proof}
Consider 
\[\Gamma(x)=\alpha f(x) +\beta g(x)=\lambda \alpha x^k \overline{g}(x^{-1})+\beta g(x).\]
Then $\deg(\Gamma)=k$ as before.  We first consider the case $\alpha\neq0$. Without loss of generality, we can then assume that $\alpha=1$.  Using Corollary \ref{coro:base} with $\Gamma_\beta(x)=\Gamma(x)$, we have
\begin{align*}
\m(\Gamma(x))&=\log|g_0|+\logplus|\beta|\\
		       &=\m(g)+\log\max\{1,|\beta|\}\\
		       &=\m(g)+ \log\max \{ |\alpha|, |\beta|\},
\end{align*}
as needed. Coming to the second case where $\alpha=0$, then
\[
\m(\alpha f +\beta g)=\m(\beta g)=\m(g)+\log|\beta|,
\]
and since $\log\max\{|\alpha|,|\beta|\}=\log|\beta|$, we get the desired result.
\end{proof}

Before moving on to the proof of the main theorem, we will also briefly discuss the existence of the Mahler measure integral for any polynomial in general. Although a fundamental and well-known fact, we prove the result here for completion, due to its central role in the proof of Theorem \ref{thm:main}. 

Let $F(y_1,y_2,\dots,y_{n+1})$ be a polynomial in $\C[y_1,\dots,y_{n+1}]$ in $(n+1)$ variables. We may write $F$ as a polynomial in $y_{n+1}$ with coefficients being $n$-variable polynomials $c_i\in\C[y_1,\dots,y_{n}]$,  as 
\[
F=c_0+c_1y_{n+1}+\cdots+c_ky_{n+1}^k,
\]
for some non-negative integer $k$. We may also factorize $F$ as
\[
F=c_k\prod_{j=1}^k\Big(y_{n+1}-\Delta_j(y_1,y_2,\dots,y_{n})\Big),
\]
where $\Delta_j(y_1,y_2,\dots,y_{n})$ are algebraic functions in $y_1,\dots,y_n$ with appropriately chosen branch cuts. Now consider the following lemma:
\begin{lem}{\cite[Lemma 3.7]{EverestWard}}\label{lem:exist}
Let $F$ be a polynomial as above. Then the Mahler measure $\m(F)$ of $F$  exists and we can write 
\[
\m(F)=\m(c_k)+\frac{1}{(2\pi i)^{n}}\sum_{j=1}^k\int_{|y_1|=1}\cdots \int_{|y_n|=1}\logplus|\Delta_j(y_1,\dots,y_n)|\;\frac{\mathrm{d} y_n}{y_n}\cdots\frac{\mathrm{d} y_1}{y_1},
\]
where all integrals involved converge.
\end{lem}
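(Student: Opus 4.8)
The plan is to prove Lemma~\ref{lem:exist} by induction on the factored form of $F$, reducing the existence of the full $(n+1)$-dimensional Mahler measure integral to the convergence of each of the $k$ single-factor integrals $\int \logplus|\Delta_j|$, and then showing that each such integral converges because $\logplus|\Delta_j|$ is a nonnegative, locally integrable function with at worst logarithmic singularities coming from the branch points of the algebraic function $\Delta_j$.

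First I would apply Jensen's formula in the variable $y_{n+1}$. For fixed $(y_1,\dots,y_n)$ on the torus $\TT^n$, viewing $F$ as a one-variable polynomial in $y_{n+1}$ with leading coefficient $c_k$ and roots $\Delta_1,\dots,\Delta_k$, Jensen's formula gives
\[
\frac{1}{2\pi i}\int_{|y_{n+1}|=1}\log|F|\,\frac{\dd y_{n+1}}{y_{n+1}}
=\log|c_k(y_1,\dots,y_n)|+\sum_{j=1}^{k}\logplus|\Delta_j(y_1,\dots,y_n)|.
\]
Integrating this identity over $(y_1,\dots,y_n)\in\TT^n$ and applying Fubini (whose hypotheses I verify below) yields exactly the claimed formula, since the $\log|c_k|$ term integrates to $\m(c_k)$ and each $\logplus|\Delta_j|$ term gives the corresponding summand. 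So the substance of the lemma is the \emph{convergence} of all integrals in sight and the legitimacy of interchanging the order of integration.

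The key technical step, which I expect to be the main obstacle, is controlling the integrability of $\log|F|$ near its zero locus and of $\logplus|\Delta_j|$ near the branch points of $\Delta_j$. Here I would use the standard fact that for a single nonzero polynomial, $\log|F|$ is integrable over the torus because its singularities are logarithmic: near a zero of $F$ the integrand behaves like $\log|\text{distance}|$, which is integrable in any dimension. More precisely, I would invoke that $\log^+|\Delta_j|$ is dominated by $\frac{1}{k}\log^+\big(\sum_i|c_i/c_k|\big)$ up to the branch locus (via a bound on the roots of a monic polynomial in terms of its coefficients, e.g. all roots satisfy $|\Delta_j|\le 1+\max_i|c_i/c_k|$), so that $\logplus|\Delta_j|$ is bounded by a finite sum of terms of the form $\logplus|c_i|$ and $\log^+|1/c_k|=\log^+|c_k|^{-1}$; each of these is an integrable function on $\TT^n$ by the one-polynomial case applied inductively. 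The points where $c_k$ vanishes or where branch cuts meet form a measure-zero set, and the logarithmic nature of the singularities guarantees absolute integrability, which is precisely what Fubini requires.

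Finally, with an integrable dominating function in hand, Fubini--Tonelli justifies both the interchange of integration order and the term-by-term splitting of the sum, and the nonnegativity of $\logplus$ ensures each individual integral $\int_{\TT^n}\logplus|\Delta_j|$ converges (rather than merely the total converging through cancellation). I would remark that although the $\Delta_j$ are only defined up to a choice of branch cuts and are individually multivalued, the function $\logplus|\Delta_j|$ is invariant under the monodromy permuting the branches, so the sum $\sum_j\logplus|\Delta_j|$ is well-defined and single-valued on $\TT^n$ minus a measure-zero set; this resolves the only conceptual subtlety in writing the formula as a sum of separate integrals. The cleanest route overall is to cite the integrability of $\log|P|$ for a single multivariate polynomial $P$ (which is exactly the existence of $\m(P)$, known classically) and bootstrap from it, so that the whole argument reduces to bounding $\logplus|\Delta_j|$ by such log-terms and applying Fubini.
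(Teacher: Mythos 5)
Your proposal is correct in substance and shares the paper's overall skeleton---induction on the number of variables, Jensen's formula applied to the inner integral in $y_{n+1}$, and a Fubini-type interchange---but the mechanism you use for the crucial convergence step is genuinely different. The paper argues softly: since the torus is compact, $\log|F|$ is bounded above there, so the iterated integral is bounded above; since $\m(c_k)$ is finite by induction and each integrand $\logplus|\Delta_j|$ is nonnegative (with the multiset of branches varying continuously), each individual integral must converge. You instead construct an explicit integrable majorant via the Cauchy root bound $|\Delta_j|\le 1+\max_i|c_i/c_k|$, dominating $\logplus|\Delta_j|$ by a constant plus $\sum_i\logplus|c_i|+\logplus|1/c_k|$, where the only delicate term $\logplus|1/c_k|=\logplus|c_k|-\log|c_k|$ is integrable because the $n$-variable case applies to $c_k$. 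This buys you absolute integrability and hence a cleaner appeal to Fubini--Tonelli, at the price of needing the inductive hypothesis in the stronger form $\log|c_k|\in L^1(\TT^n)$ rather than mere convergence of an iterated integral (a harmless strengthening, but you should state it). Two points to tighten: first, your majorant controls $\log|c_k|+\sum_j\logplus|\Delta_j|$ as a function on $\TT^n$, whereas Fubini for $\m(F)$ needs $\log|F|\in L^1(\TT^{n+1})$; this also requires bounding the integral of $\log^{-}|F|$ in the $y_{n+1}$-variable, which follows from Jensen combined with the upper bound on $\log|F|$ coming from compactness of the torus---the very observation the paper uses, so you should not skip it. Second, it is not each individual $\logplus|\Delta_j|$ that is monodromy-invariant, as you write, but the multiset $\{\Delta_j\}$ and hence the sum $\sum_j\logplus|\Delta_j|$; the separate integrals are well-defined only after fixing branch cuts, which is how the paper phrases it. Finally, your closing suggestion to simply cite the classical existence of $\m(P)$ for multivariate $P$ would be circular in context, since that existence is precisely part of what the lemma (and the paper's self-contained proof) establishes; the inductive version of your argument is the one to keep.
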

\begin{proof}
We proceed by induction on the number of variables in the polynomial $F$. In the base case where $n=1$, the $\Delta_j$'s are simply the roots of $F$, and the result follows by Jensen's formula. Now assume the result is true for $n$ variables. Using the factorization of $F$, we can write
\begin{align}
\m(F)&=\frac{1}{(2\pi i)^{n+1}}\int_{|y_1|=1}\cdots \int_{|y_n|=1}\int_{|y_{n+1}|=1} \log|F|\;\frac{\mathrm{d} y_{n+1}}{y_{n+1}}\;\frac{\mathrm{d} y_n}{y_n}\cdots\frac{\mathrm{d} y_1}{y_1}\nonumber\\
&=\m(c_k)+\frac{1}{(2\pi i)^{n+1}}\sum_{j=1}^k\int_{|y_1|=1}\cdots \int_{|y_n|=1}\underbrace{\Big(\int_{|y_{n+1}|=1} \log\Big|y_{n+1}-\Delta_j\Big|\;\frac{\mathrm{d} y_{n+1}}{y_{n+1}}\Big)}_I\;\frac{\mathrm{d} y_n}{y_n}\cdots\frac{\mathrm{d} y_1}{y_1}.\nonumber
\end{align}
Recall that if $\beta \in\C$, then Jensen's formula implies that
\[
\m( x+\beta)=\frac{1}{2\pi i}\int_{|x|=1}\log|x+\beta|=\logplus|\beta|.
\]
Thus, for constant $y_1,y_2,\dots, y_n$, the inner integral above is given by $I=2\pi i\logplus|\Delta_j(y_1,\dots,y_n)|$, and we have 
\begin{align}\label{eq:integral}
\m(F)=\m(c_k)+\frac{1}{(2\pi i)^{n}}\sum_{j=1}^k\int_{|y_1|=1}\cdots \int_{|y_n|=1}\logplus|\Delta_j(y_1,\dots,y_n)|\;\frac{\mathrm{d} y_n}{y_n}\cdots\frac{\mathrm{d} y_1}{y_1}.
\end{align}
The polynomial $c_k$ has $n$ variables and by the induction hypothesis, it follows that $\m(c_k)$ exists.  It remains to show that the integral in \eqref{eq:integral} also exists.  First, we note that the torus $\mathbb{T}^{n+1}$ defined by $\{(y_1,y_2,\dots,y_{n+1})\in\C^{n+1}\;:\;|y_j|=1\}$ is compact, and hence the polynomial $F$ must be bounded above on this region, and therefore $\m(F)$ is bounded. Since $\m(c_k)$ is finite, this means the integral in \eqref{eq:integral} is also bounded from above.  Secondly, if we write $y_j=e^{i\theta_j}$ for a real variable $\theta_j$, then 
\[
\frac{\mathrm{d} y_j}{y_j}=i\;\mathrm{d} \theta_j,
\]
for each $j$. Taking into account the factor of $(i)^n$ in the denominator outside the summation, we observe that the integrand is $\logplus|\Delta_j|$, which is totally real and non-negative. In addition,  although the algebraic functions $\Delta_j$ may not be continuous individually,  the multiset of values $\{\Delta_j(y_1,\dots,y_n)\}$ depends continuously on $(y_1,\dots,y_n)$, away from the poles of the $\Delta_j$. Thus, the integrands are non-negative and continuous, and along with the fact that the integrals are bounded above, this shows that they all converge. This completes the induction and we conclude that $\m(F)$ exists and that the integral in \eqref{eq:integral} converges.
\end{proof}

\section{Proof of Theorem \ref{thm:main}} \label{sec:proof}
We are now ready to prove our main result.  Recall that we replace the variable $x$ by $f(x)/g(x)$ in the polynomial $P$ to obtain the rational function $\widetilde P$. We will show that the Mahler measure computation for both $P$ and $\widetilde P$ is given by the same integral.

\begin{proof}[Proof of Theorem \ref{thm:main}]
We first write $P$ as a polynomial in $x$ with coefficients in $\C[y_1,\dots,y_n]$ as
\[
P=P_0+P_1x+\cdots+P_\ell x^\ell,
\]
where $P_j\in\C[y_1,\dots,y_n]$ for each $1\le j\le \ell$.  We can now factorize this as
\[
P=P_\ell\prod_{j=1}^\ell\Big(x-\Delta_j(y_1,\dots,y_n)\Big),
\] 
where the $\Delta_j$'s are algebraic functions in $y_1,\dots,y_n$ with appropriately chosen branch cuts.  Then, using Lemma \ref{lem:exist}, the Mahler measure of $P$ is given by

\begin{align}\label{eq:leftterm}
\m(P)=\m(P_\ell)+\frac{1}{(2\pi i)^{n}}\sum_{j=1}^\ell\int_{|y_1|=1}\cdots \int_{|y_n|=1}\logplus|\Delta_j(y_1,\dots,y_n)|\;\frac{\mathrm{d} y_n}{y_n}\cdots\frac{\mathrm{d} y_1}{y_1}.
\end{align}

We now move on to the evaluation of $\m(\widetilde{P})$, the rational function obtained from $P$ by replacing $x$ by $f(x)/g(x)$, which gives
\begin{align*}
\widetilde{P}&=P_\ell\prod_{j=1}^\ell\left(\frac{f(x)}{g(x)}-\Delta_j(y_1,\dots,y_n)\right)\\
 &=(g(x))^{-\ell}P_\ell\prod_{j=1}^\ell\Big({f(x)}-g(x)\Delta_j(y_1,\dots,y_n)\Big).
\end{align*}
Thus, the  Mahler measure of $\widetilde{P}$ is given by
\begin{align*}
\m(\widetilde{P})&=\frac{1}{(2\pi i)^{n+1}}\int_{|y_1|=1}\cdots \int_{|y_n|=1}\int_{|x|=1} \log|\widetilde{P}|\;\frac{\mathrm{d} x}x\;\frac{\mathrm{d} y_n}{y_n}\cdots\frac{\mathrm{d} y_1}{y_1}\nonumber\\
&=\m(P_\ell)-\ell\cdot \m(g)+\frac{1}{(2\pi i)^{n+1}}\sum_{j=1}^\ell\int_{|y_1|=1}\cdots \int_{|y_n|=1}\underbrace{\Big(\int_{|x|=1} \log\Big|{f(x)}-g(x)\Delta_j\Big|\;\frac{\mathrm{d} x}x\Big)}_J\;\frac{\mathrm{d} y_n}{y_n}\cdots\frac{\mathrm{d} y_1}{y_1}.\nonumber
\end{align*}
Keeping the $y_1,y_2,\dots,y_n$ constant and using Proposition \ref{prop:mainprop}, we can write the inner integral $J$ as
\[
J=\int_{|x|=1} \log\Big|{f(x)}-g(x)\Delta_j\Big|\;\frac{\mathrm{d} x}x=2\pi i\,\m(f-g\Delta_j)=2\pi i\Big(\m(g)+\logplus{\Delta_j}\Big),
\]
for each $1\le j\le \ell$. Noting that
\begin{align*}
\sum_{j=1}^\ell \int_{|y_1|=1}\cdots \int_{|y_n|=1}\Big(2\pi i\,\m(g)\Big)\;\frac{\mathrm{d} y_n}{y_n}\cdots\frac{\mathrm{d} y_1}{y_1}
&=2\pi i\,\m(g)\sum_{j=1}^\ell \int_{|y_1|=1}\cdots \int_{|y_n|=1}1\;\frac{\mathrm{d} y_n}{y_n}\cdots\frac{\mathrm{d} y_1}{y_1}\\
&=2\pi i\,\m(g)\sum_{j=1}^\ell (2\pi i)^n=\ell\cdot \m(g)\cdot (2\pi i)^{n+1},
\end{align*}
we can rewrite the Mahler measure of $\widetilde{P}$ as
\begin{align}\label{eq:rightterm}
\m(\widetilde{P})&=\m(P_\ell)-\ell \cdot \m(g)+\ell \cdot \m(g)+\frac{1}{(2\pi i)^{n}}\sum_{j=1}^\ell \int_{|y_1|=1}\cdots \int_{|y_n|=1}\logplus|\Delta_j(y_1,\dots,y_n)|\;\frac{\mathrm{d} y_n}{y_n}\cdots\frac{\mathrm{d} y_1}{y_1}\nonumber\\
&=\m(P_\ell)+\frac{1}{(2\pi i)^{n}}\sum_{j=1}^\ell \int_{|y_1|=1}\cdots \int_{|y_n|=1}\logplus|\Delta_j(y_1,\dots,y_n)|\;\frac{\mathrm{d} y_n}{y_n}\cdots\frac{\mathrm{d} y_1}{y_1}.
\end{align}
We observe that equations \eqref{eq:leftterm} and \eqref{eq:rightterm} evaluate to the same expression and conclude that
\[
\m(P)=\m(\widetilde P),
\]
which completes the proof.
\end{proof}
\section{Applications of Theorem \ref{thm:main}} \label{sec:applications}

As remarked in the introduction, by taking $P(x,y,z)=x+1+(x-1)(y+z)$, and by considering its Mahler measure given in \eqref{eq:Condon},  we can deduce equations \eqref{eq:2}, \eqref{eq:4},  and \eqref{eq:5} by  setting $g(x)=x+2, x^2-2x+2,$ and $x^4+x+2$ with $k=2, 4,$ and $5$ respectively, with $\lambda=1$,  and after subtracting $\log 2$ from both sides of the evaluated identity in Theorem \ref{thm:main}. 

Theorem \ref{thm:main} can be applied to very general polynomials $P$ and even to rational functions. Indeed, to work with rational functions, we can apply Theorem \ref{thm:main} to the numerator and the denominator individually, and then put them back together to recover the rational function with the change of variables. 

Here we examine an interesting application involving  rational functions with arbitrarily many variables whose Mahler measure can be computed explicitly.  Considering the fact that calculating the exact Mahler measure of multivariable polynomials is generally hard, let alone those with arbitrarily many variables, this is an intriguing and rare result.

Let
\begin{align*}
R_m(x_1,\dots, x_m,z):=&z+\left(\frac{1-x_1}{1+x_1}\right)\cdots\left(\frac{1-x_m}{1+x_m}\right),\\
S_m(x_1,\dots,x_m,x,y,z):=&  (1+x)z +  \left(\frac{1-x_1}{1+x_1}\right)
\cdots \left( \frac{1-x_{m}}{1+x_{m}}\right)(1+y),\\
T_m(x_1,\dots,x_m,x,y):=&1 + \left( \frac{1 - x_1}{1+x_1}\right)\cdots  \left( \frac{1 - x_{m}}{1+x_{m}} \right)x + \left( 1 -\left(\frac{1 - x_1}{1+x_1} \right)\cdots  \left( \frac{1 - x_{m}}{1+x_{m}}\right)\right) y. 
\end{align*}

For $a_1, \dots a_m \in \C$, define
\[s_\ell(a_1,\dots, a_m)=\left\{
\begin{array}{ll}
1& \mbox{if }\ell=0,\\
\sum_{i_1<\cdots<i_\ell}a_{i_1}\cdots a_{i_\ell}& \mbox{if }0<\ell\leq m,\\
0& \mbox{if }m<\ell.
\end{array}
\right.\]
Recall that the Bernoulli numbers are given by 
\[\frac{x}{e^x-1}=\sum_{n=0}^\infty \frac{B_n x^n}{n!}.\]

The Mahler measures of the polynomials $R_m,S_m,T_m$ can be computed by the following formulas. 
\begin{thm}\label{thm:nvar} (\cite{nvar,LL}) We have the following identities.
For $n \geq 1$,
 \[\m\left(R_{2n} \right) =\sum_{h=1}^n  \frac{s_{n-h}(2^2, 4^2, \dots, (2n-2)^2)}{(2n-1)!} \left(\frac{2}{\pi}\right)^{2h}\mathcal{A}(h)
,\]
where
\[\mathcal{A}(h):=(2h)!\left(1-\frac{1}{2^{2h+1}}\right) \zeta(2h+1).\]

For $n \geq 0$,
\[ \m\left( R_{2n+1}\right)= \sum_{h=0}^n \frac{s_{n-h}(1^2,3^2, \dots, (2n-1)^2)}{(2n)!}  \left(\frac{2}{\pi}\right)^{2h+1} \mathcal{B}(h),\]
where
\[\mathcal{B}(h):=(2h+1)!L(\chi_{-4}, 2h+2).\]

For $n \geq 1$,
\[\m\left(S_{2n}\right)=\sum_{h=1}^n\frac{ s_{n-h}(2^2,4^2, \dots, (2n-2)^2)}{(2n-1)!}\left(\frac{2}{\pi}\right)^{2h+2}\mathcal{C}(h),\]
where
\[ \mathcal{C}(h):=\sum_{\ell=1}^{h}\binom{2h}{2\ell}
\frac{(-1)^{h-\ell} }{4h} B_{2(h-\ell)} \pi^{2h-2\ell} (2\ell+2)! \left(1-\frac{1}{2^{2\ell+3}}\right) \zeta(2\ell+3). \]

For $n \geq 0$,
\[  \m\left( S_{2n+1} \right)=\sum_{h=0}^{n} \frac{s_{n-h}(1^2, 3^2 \dots, (2n-1)^2)} {(2n)!} \left(\frac{2}{\pi}\right)^{2h+3}\mathcal{D}(h),\]
where
\[\mathcal{D}(h):=\sum_{\ell=0}^{h}\binom{2h+1}{2\ell+1}\frac{(-1)^{h-\ell}}{2(2h+1)}B_{2(h-\ell)}\pi^{2h-2\ell}(2\ell+3)!L(\chi_{-4},2\ell+4).\]

For $n\geq 1$,
\[ \m \left ( T_{2n} \right)= \frac{\log 2}{2} +\sum_{h=1}^n \frac{s_{n-h}(2^2, 4^2, \dots, (2n-2)^2)}{(2n-1)!} \left(\frac{2}{\pi}\right)^{2h}\mathcal{E}(h),\]
where
\begin{align*}
\mathcal{E}(h):=& \frac{(2h)!}{2}
\left(1-\frac{1}{2^{2h+1}}\right) \zeta(2h+1)+  \sum_{\ell=1}^{h}  (2^{2(h-\ell)-1}-1)\binom{2h}{2\ell}\frac{ (-1)^{h-\ell+1}}{2h} \\&\times B_{2(h-\ell)}
\pi^{2h-2\ell} (2\ell)!\left(1-\frac{1}{2^{2\ell+1}}\right)\zeta(2\ell+1).
\end{align*}

For $n\geq 0$,
\[ \m \left ( T_{2n+1}\right)= \frac{\log 2}{2} +\sum_{h=1}^{n}\frac{s_{n-h}(2^2,4^2, \dots,(2n-2)^2)}{(2n+1)!}\left(\frac{2}{\pi}\right)^{2h+2} \mathcal{F}(h),\]
where \begin{align*}\mathcal{F}(h):=& \frac{(2h+2)!}{2}
\left(1-\frac{1}{2^{2h+3}}\right)  \zeta(2h+3)+   \frac{\pi^2n^2}{2}(2h)!
\left(1-\frac{1}{2^{2h+1}}\right)  \zeta(2h+1) \\
& + n(2n+1)\sum_{\ell=1}^{h} (2^{2(h-\ell)-1}-1) \binom{2h}{2\ell}
\frac{(-1)^{h-\ell+1}}{4h} B_{2(h-\ell)} \pi^{2h+2-2\ell}(2\ell)!
\left(1-\frac{1}{2^{2\ell+1}}\right)\zeta(2\ell+1).
\end{align*}
\end{thm}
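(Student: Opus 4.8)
The plan is to exploit that each of $R_m$, $S_m$, $T_m$ is affine in a distinguished set of variables, so the outer integrations collapse by Jensen's formula and the Cassaigne--Maillot formula, leaving a single integral over $x_1,\dots,x_m$. Writing $W:=\prod_{j=1}^m \frac{1-x_j}{1+x_j}$ and integrating first in $z$ via $\m(z+\beta)=\logplus|\beta|$ (as recalled in the proof of Lemma \ref{lem:exist}) gives, for $R_m=z+W$,
\[
\m(R_m)=\frac{1}{(2\pi i)^m}\int_{|x_1|=1}\cdots\int_{|x_m|=1}\logplus|W|\;\frac{\mathrm{d} x_m}{x_m}\cdots\frac{\mathrm{d} x_1}{x_1}.
\]
Since $S_m=(1+x)z+W(1+y)$ and $T_m=1+Wx+(1-W)y$ are trinomial linear forms in $(z,y)$ and in $(x,y)$ respectively, I would integrate those two variables out using the Cassaigne--Maillot formula \cite{CM}, expressing the inner Mahler measure through the Bloch--Wigner dilogarithm $D$ evaluated at arguments built from $|W|$, together with elementary $\logplus$ terms, and again reduce to an integral over the $x_j$.

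Next I would parametrize $x_j=e^{i\theta_j}$, so that $\frac{1-x_j}{1+x_j}=-i\tan(\theta_j/2)$ and hence $W=(-i)^m\prod_j\tan(\theta_j/2)$, which is real when $m$ is even and purely imaginary when $m$ is odd. This parity dichotomy is exactly the source of the even/odd split in the statement, with odd zeta values appearing in one case and $L(\chi_{-4},\cdot)$ values in the other. Substituting $t_j=\tan(\theta_j/2)$, the normalized measure $(2\pi i)^{-m}\prod_j \frac{\mathrm{d} x_j}{x_j}$ becomes the product of Cauchy densities $\prod_j\frac{1}{\pi}\frac{\mathrm{d} t_j}{1+t_j^2}$ with each $t_j$ ranging over $\R$. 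Setting $U_j=\log|t_j|$, a residue computation of $\int_{\R}|t|^{is}\frac{\mathrm{d} t}{\pi(1+t^2)}=\mathrm{sech}(\pi s/2)$ identifies the $U_j$ as hyperbolic-secant variables, so that the product $\prod_j t_j$ corresponds to the sum $\sum_j U_j$ with characteristic function $\mathrm{sech}^m(\pi s/2)$.

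With this reformulation $\m(R_m)=\mathbb{E}\big[(U_1+\cdots+U_m)^+\big]$, and using the symmetry of the density together with the identity $|v|=\frac{1}{\pi}\int_{\R}\frac{1-\cos(sv)}{s^2}\,\mathrm{d} s$, this equals
\[
\m(R_m)=\frac{1}{\pi}\int_0^\infty\frac{1-\mathrm{sech}^m(\pi s/2)}{s^2}\,\mathrm{d} s,
\]
with analogous integrals, carrying extra elementary or dilogarithmic weights, governing the $S_m$ and $T_m$ cases. I would evaluate these by residues, where the poles of $\mathrm{sech}(\pi s/2)$ on $i(2\Z+1)$ feed the $L(\chi_{-4},\cdot)$ values and the poles arising in the even-$m$ case feed the odd zeta values, or equivalently by expanding $\mathrm{sech}^m$ via its partial-fraction series and integrating term by term. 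The symmetric functions $s_{n-h}(2^2,4^2,\dots)$ and $s_{n-h}(1^2,3^2,\dots)$ and the Bernoulli numbers in $\mathcal{C},\mathcal{D},\mathcal{E},\mathcal{F}$ should emerge from a recursion stepping $m\mapsto m+2$, that is, from multiplying the characteristic function by $\mathrm{sech}^2(\pi s/2)=-\frac{2}{\pi}\frac{\mathrm{d}}{\mathrm{d} s}\tanh(\pi s/2)$, which injects the factors $(2k)^2$ or $(2k-1)^2$ at each stage.

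The analytic reductions above are essentially routine once the setup is in place; the genuine difficulty is the exact combinatorial bookkeeping needed to collapse the residue sums, or the term-by-term expansions, into the precise closed forms, matching the symmetric-function coefficients and the intricate Bernoulli-number combinations exactly. I expect this to demand a carefully structured induction on $m$, advancing two variables at a time, with the inductive step verified at the level of generating functions. The $S_m$ and $T_m$ cases will additionally require controlling the Bloch--Wigner contributions coming from Cassaigne--Maillot, whose dilogarithm arguments must be expanded and recombined to produce the stated zeta and $L$-values, and this recombination is where I anticipate the main obstacle to lie.
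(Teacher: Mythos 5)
First, note that the paper does not prove Theorem \ref{thm:nvar} at all: it is quoted from \cite{nvar,LL}, so there is no internal proof to compare yours against. Your opening reductions are reasonable and broadly in the spirit of how these formulas are established in those references: integrating out $z$ for fixed $x_1,\dots,x_m$, writing $\frac{1-x_j}{1+x_j}=-i\tan(\theta_j/2)$ on the torus, and organizing a two-step recursion in $m$ that is meant to produce the symmetric functions $s_{n-h}(2^2,4^2,\dots)$ and $s_{n-h}(1^2,3^2,\dots)$.

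As it stands, however, the proposal has two genuine gaps. The first you acknowledge yourself: the entire content of the theorem lies in the exact closed forms --- the symmetric-function coefficients, the Bernoulli numbers in $\mathcal{C},\mathcal{D},\mathcal{E},\mathcal{F}$, and the precise rational multiples of $\zeta(2\ell+1)$ and $L(\chi_{-4},2\ell+2)$ --- and your argument defers all of that to an unspecified ``carefully structured induction'' whose inductive step is never exhibited. A plan that stops exactly where the difficulty begins is not a proof. The second gap is structural and affects $S_m$ and $T_m$: your reduction to the law of $U_1+\cdots+U_m$ with characteristic function $\mathrm{sech}^m(\pi s/2)$ records only $|W|=\prod_j|\tan(\theta_j/2)|$, but the Cassaigne--Maillot evaluation of, say, $T_m=1+Wx+(1-W)y$ involves the triangle with sides $1$, $|W|$, $|1-W|$, and $|1-W|$ depends on the sign of $W$ (for $m$ even) or its imaginary direction (for $m$ odd), not merely on $|W|$; the Bloch--Wigner term likewise depends on the angles of that triangle. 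So the one-dimensional ``$\mathrm{sech}$ density'' framework that works for $\m(R_m)=\mathbb{E}\bigl[(U_1+\cdots+U_m)^+\bigr]$ does not transport to $S_m$ and $T_m$ without retaining the signed information on $W$, and you give no mechanism for converting the resulting dilogarithm integrals into the stated $\zeta$ and $L$-values. To complete the argument you would need to carry out the iterated-integral recursion of \cite{nvar,LL} (or an equivalent explicit generating-function identity) rather than leave it as an expectation.
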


We can apply Theorem \ref{thm:main} to each variable $x_\ell$, replacing it with $f(x_\ell)/g(x_\ell)$ for any compatible $f(x_\ell)$, $g(x_\ell)$. This yields infinitely many more formulas satisfying the results of Theorem \ref{thm:nvar}. For example, $g(x_1)=x_1+2$, $k=2$ and $\lambda=1$ gives, for $m
\geq 1$
\begin{align*}
\m\left(z+\left(\frac{x_1^2-1}{x_1^2+x_1+1}\right)\left(\frac{1-x_2}{1+x_2}\right)\cdots\left(\frac{1-x_m}{1+x_m}\right)\right) =&~\m\left(R_m \right),\\
\m\left((1+x)z + \left(\frac{x_1^2-1}{x_1^2+x_1+1}\right) \left(\frac{1-x_2}{1+x_2}\right)
\cdots \left( \frac{1-x_{m}}{1+x_{m}}\right)(1+y) \right)=&~ \m\left(S_m\right),\\
\end{align*}
and 
\begin{align*}
\m\left(1 + \left(\frac{x_1^2-1}{x_1^2+x_1+1}\right)\right. & \left. \left( \frac{1 - x_2}{1+x_2}\right)\cdots  \left( \frac{1 - x_{m}}{1+x_{m}} \right)x \right. \\&\left. + \left( 1 -\left(\frac{x_1^2-1}{x_1^2+x_1+1}\right)\left(\frac{1 - x_2}{1+x_2} \right)\cdots  \left( \frac{1 - x_{m}}{1+x_{m}}\right)\right) y\right)=\m(T_m). 
\end{align*}
In particular, equation \eqref{eq:93zeta5} follows from considering the identity with $S_2$. We remark that in this case the formula for the Mahler measure of the rational function $\widetilde{S_2}$ can be directly written as the Mahler measure of the polynomial in \eqref{eq:93zeta5} by multiplying by the denominator, which has Mahler measure zero since it is the product of cyclotomic polynomials. 
Similarly, equations \eqref{eq:93zeta5a} and \eqref{eq:93zeta5b} are obtained by using $g(x)=x^2-2x+2,$ and $x^4+x+2$ with $k=4,$ and $5$ and $\lambda=1$ with $S_2$.  

This application is yet another example emphasising the versatility of Theorem \ref{thm:main} towards expanding existing results to generate new identities, that would otherwise be very difficult to establish.

\section{A discussion on the genus of the involved varieties} \label{sec:genus}
In this section we briefly discuss the relation of Mahler measure with the regulator in order to give additional context to the formulas that we have proven. More specifically, we explain how some of the new polynomials correspond to curves of higher genus, which motivates our interest in such formulas.

We start by following Deninger \cite{Deninger} as well as the treatment by Brunault and Zudilin \cite[Section 8.3]{BrunaultZudilin}. Let $F\in \C[y_1,\dots,y_{n+1}]$ be an irreducible polynomial in $(n+1)$ variables. As in Section \ref{sec:intermediate}, we write 
\[F=c_0+c_1y_{n+1}+\cdots+c_ky_{n+1}^k,
\]
where $c_i \in \C[y_1,\dots,y_{n}]$. Applying Jensen's formula as in the proof of Lemma \ref{lem:exist}, we can write 
\[\m(F)=\m(c_k) +\frac{1}{(2\pi i)^n} \int_D \log|y_{n+1}| \frac{\mathrm{d} y_1}{y_1}\wedge \cdots \wedge \frac{\mathrm{d}y_n}{y_n},\]
where 
\[D=\{(y_1,\dots, y_n)\, :\, |y_1|=\cdots=|y_n|=1, |y_{n+1}|>1, F(y_1,\dots,y_{n+1})=0\}\]
is known as the Deninger cycle attached to $F$. 

The differential form $\log|y_{n+1}| \frac{\mathrm{d}y_1}{y_1}\wedge \cdots \wedge \frac{\mathrm{d}y_n}{y_n},$
defined over the smooth part $Z_F^{\mathrm{reg}}$ of the zero locus of $F$ in $(\C^\times)^{n+1}$, is not necessarily closed, but there is a closed form $\eta(y_1,\dots, y_{n+1})$, defined on the de Rham cohomology of $Z_F^{\mathrm{reg}}$, that satisfies 
\[\eta(y_1,\dots, y_{n+1})|_D=(-1)^n \log|y_{n+1}| \frac{\mathrm{d}y_1}{y_1}\wedge \cdots \wedge \frac{\mathrm{d}y_n}{y_n}.\]
We have the following theorem.
\begin{thm}{\cite[Proposition 3.3]{Deninger}, \cite[Theorem 8.11]{BrunaultZudilin}}\label{thm:regulator} Let $F \in \C[y_1,\dots, y_{n+1}]$ be an irreducible polynomial such that $\overline{D}$ is a topological $n$-chain contained in $Z_F^{\mathrm{reg}}$. Then 
\[\m(F)=\m(c_k) +\frac{(-1)^n}{(2\pi i)^n} \int_{\overline{D}} \eta(y_1,\dots,y_{n+1}).\]
 \end{thm}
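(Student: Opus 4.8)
The plan is to start from the integral formula for $\m(F)$ already established via Jensen's formula, namely
\[
\m(F)=\m(c_k)+\frac{1}{(2\pi\ii)^n}\int_D \log|y_{n+1}|\,\frac{\dd y_1}{y_1}\wedge\cdots\wedge\frac{\dd y_n}{y_n},
\]
and then to recognize that the integrand, restricted to the Deninger cycle $D$, agrees (up to the sign $(-1)^n$) with the restriction $\eta|_D$ of the globally defined closed form $\eta$ on $Z_F^{\mathrm{reg}}$. Since by hypothesis $\ol D$ is a genuine topological $n$-chain contained entirely inside the smooth locus $Z_F^{\mathrm{reg}}$, the form $\eta$ is defined along all of $\ol D$, and the integral $\int_{\ol D}\eta$ makes sense as a period. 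First I would substitute the defining relation $\eta|_D=(-1)^n\log|y_{n+1}|\,\frac{\dd y_1}{y_1}\wedge\cdots\wedge\frac{\dd y_n}{y_n}$ into the Jensen formula, which immediately rewrites the integrand and produces the factor $(-1)^n$ in front.

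The second step is to justify replacing the integral over the open cycle $D$ by the integral over its closure $\ol D$. The cycle $D$ is defined by the open condition $|y_{n+1}|>1$ together with $|y_1|=\cdots=|y_n|=1$; its closure $\ol D$ adds the boundary locus where $|y_{n+1}|=1$. On this added boundary the integrand $\log|y_{n+1}|$ vanishes, so passing from $D$ to $\ol D$ does not change the value of the integral of the original differential form, and correspondingly does not change $\int\eta$ since the two forms agree on $D$. Thus
\[
\frac{1}{(2\pi\ii)^n}\int_D \log|y_{n+1}|\,\frac{\dd y_1}{y_1}\wedge\cdots\wedge\frac{\dd y_n}{y_n}
=\frac{(-1)^n}{(2\pi\ii)^n}\int_{\ol D}\eta(y_1,\dots,y_{n+1}),
\]
and combining with the Jensen formula yields exactly the asserted identity.

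The main subtlety, and the step I expect to require the most care, is the legitimacy of the passage from $D$ to $\ol D$ and the claim that $\eta$ is well-behaved there. The form $\log|y_{n+1}|\,\frac{\dd y_1}{y_1}\wedge\cdots\wedge\frac{\dd y_n}{y_n}$ is not closed, so it is only the specific closed representative $\eta$ in the de Rham cohomology of $Z_F^{\mathrm{reg}}$ that can be integrated as a cohomological period; one must invoke that $\eta$ and the original form agree precisely on $D$ (the stated compatibility relation), while $\eta$ extends smoothly across the boundary of $\ol D$ because $\ol D$ lies in the smooth locus where $\eta$ is defined. The hypothesis that $\ol D$ is a topological $n$-chain inside $Z_F^{\mathrm{reg}}$ is exactly what guarantees this: it rules out $\ol D$ meeting the singular points of $Z_F$ or the poles of the $\Delta_j$, where $\eta$ could fail to be defined or the chain could fail to close up. Granting this hypothesis, the remaining argument is the direct substitution and boundary-vanishing bookkeeping outlined above.
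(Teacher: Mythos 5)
Your derivation is correct and matches what the paper intends: the paper states Theorem \ref{thm:regulator} as a quoted result of Deninger and Brunault--Zudilin rather than proving it, but the two ingredients you combine --- the Jensen-formula identity expressing $\m(F)-\m(c_k)$ as an integral over $D$, and the restriction property $\eta|_D=(-1)^n\log|y_{n+1}|\,\frac{\dd y_1}{y_1}\wedge\cdots\wedge\frac{\dd y_n}{y_n}$ --- are exactly the setup recorded immediately before the statement, and passing from $D$ to $\ol D$ costs nothing (more because $\ol D\setminus D$ is an $(n-1)$-dimensional set, hence negligible for an $n$-form, than because the integrand vanishes there, though both observations apply). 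The genuine mathematical content --- the construction of the closed regulator form $\eta$ on $Z_F^{\mathrm{reg}}$ and the justification of the Deninger-cycle form of Jensen's formula --- is assumed both by you and by the paper, so nothing is missing relative to what the paper itself supplies.
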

 The form $\eta$ represents the Beilinson's regulator evaluated in the Minor symbol $\{y_1,\dots, y_{n+1}\}$.  Beilinson's conjecture, that we will not describe here, together with  Theorem \ref{thm:regulator}  suggest that the Mahler measure $\m(F)$ should be related to an $L$-function associated to $Z_F^{\mathrm{reg}}$ when the boundary of the Deninger cycle is {\em trivial}. There are several technicalities that need to be taken into consideration (for example, the fact that $Z_F^{\mathrm{reg}}$ is not projective), that we will not discuss here. 
  
A special case arises when $\eta(y_1,\dots, y_{n+1})$ is an exact form and the boundary of the Deninger cycle $\partial \overline{D}$ is {\em non-trivial}. Writing $\eta(y_1,\dots, y_{n+1})=\mathrm{d}\, \omega(y_1,\dots, y_{n+1})$, Stokes's formula gives 
\[\m(F)=\m(c_k) +\frac{(-1)^n}{(2\pi i)^n} \int_{\partial \overline{D}} \omega(y_1,\dots,y_{n+1}),\]
where $\partial \overline{D}$ is the oriented boundary of the Deninger cycle. 
As proposed by Maillot \cite{Maillot} after an idea of Darboux \cite{Darboux},
 $\partial \overline{D}$ is contained in the algebraic subvariety 
 \[W_F : F(y_1,\dots, y_{n+1})=\overline{F}\left( \frac{1}{y_1},\dots,\frac{1}{y_{n+1}}\right)=0,\]
 and this suggests that $\m(F)$ is related to an $L$-function associated to $W_F$. 
 
 Under these conditions, the computation of the Mahler measure may proceed depending on the properties of $\omega(y_1,\dots,y_{n+1})$. A possibility is that $\omega$ is
 exact, and we can further apply Stokes's formula. 

For example, in three variables, when the $1$-form $\omega$ is exact, its primitive can be expressed in terms of a trilogarithm. This is the case of formulas \eqref{eq:Smyth3}, \eqref{eq:Condon}, or the polynomials $R_2$ and $T_1$ (the polynomial $S_0$ has the same Mahler measure as \eqref{eq:Smyth3}). The Mahler measure formulas corresponding to these polynomials result in the evaluation of a trilogarithm in the intersection of  $\{ |x|=|y| = |z|=1 \}$ with a curve $W_F$ of genus 0. For example, let us look at \eqref{eq:Condon}. Then \[W_F:(x+1)(xy^2-y^2+xy+y-x+1)=0,\] the union of a line and a genus 0 curve. 
What is interesting about applying Theorem \ref{thm:main} is that it results in evaluations of {\em higher genus curves} that still lead to {\em simple Mahler measure formulas}.  For example, let us look at \eqref{eq:2}. Then \[W_F: (x^2+x+1)(x^2y^2-y^2+x^2y+xy+y-x^2+1)=0,\] and the main factor is a genus 1 curve. Analogously, 
 if we consider the corresponding curves $W_F$ for formulas  \eqref{eq:4} and \eqref{eq:5}, we obtain genus $3$ and $4$ curves respectively. 

In other cases, such as in formula \eqref{eq:L21}, $\omega$ is not exact. This poses a challenge to the evaluation of the integral, which explains why formula \eqref{eq:L21} is only known numerically, but it has not been proven. In this case, the integration takes place in a curve $W_F$ of genus 1, which corresponds, precisely, to the elliptic curve $21a1$. Here Theorem \ref{thm:main} results in  evaluations of higher genus curves. For example, if we consider the corresponding curves $W_F$ for formulas \eqref{eq:L21-2},  \eqref{eq:L21-3}, and \eqref{eq:L21-4}, we obtain genus $3, 7$, and $9$ respectively. 

In conclusion, Theorem \ref{thm:main} leads to polynomials with substantially more complicated geometry than the original polynomials but with the same Mahler measure. This process serves to generate highly non-trivial identities.

\bibliographystyle{amsalpha}
\bibliography{Bibliography}
\end{document}